\def\opn#1#2{\def#1{\operatorname{#2}}} % to make operators
	\opn\chara{char} \opn\length{\ell} \opn\pd{pd} \opn\rk{rk}
	\opn\projdim{proj\,dim} \opn\injdim{inj\,dim} \opn\rank{rank}
	\opn\depth{depth} \opn\grade{grade} \opn\height{height}
	\opn\embdim{emb\,dim} \opn\codim{codim}
	\opn\Cl{Cl}
	\opn\Tr{Tr} \opn\bigrank{big\,rank}
	\opn\superheight{superheight}\opn\lcm{lcm}
	\opn\trdeg{tr\,deg}%\emph{
	\opn\rdeg{rdeg}
	\opn\reg{reg} \opn\lreg{lreg} \opn\ini{in} \opn\lpd{lpd}
	\opn\size{size} \opn\sdepth{sdepth}
	\opn\link{link}\opn\fdepth{fdepth}\opn\lex{lex}
	\opn\tr{tr}
	\opn\type{type}
	\opn\gap{gap}
	\opn\arithdeg{arith-deg}
	\opn\revlex{revlex}
	\opn\div{div} \opn\Div{Div} \opn\cl{cl} \opn\Cl{Cl}
	\opn\Spec{Spec} \opn\Supp{Supp} \opn\supp{supp} \opn\Sing{Sing}
	\opn\Ass{Ass} \opn\Min{Min}\opn\Mon{Mon}
	\opn\Ann{Ann} \opn\Rad{Rad} \opn\Soc{Soc}
	\opn\Im{Im} \opn\Ker{Ker} \opn\Coker{Coker} \opn\Am{Am}
	\opn\Hom{Hom} \opn\Tor{Tor} \opn\Ext{Ext} \opn\End{End}
	\opn\Aut{Aut} \opn\id{id}
	\opn\nat{nat}
	\opn\pff{pf}%   \pf exists already
	\opn\Pf{Pf} \opn\GL{GL} \opn\SL{SL} \opn\mod{mod} \opn\ord{ord}
	\opn\Gin{Gin} \opn\Hilb{Hilb}\opn\sort{sort}
	\opn\PF{PF}\opn\Ap{Ap}
	\opn\mult{mult}
	\opn\bight{bight}
	\opn\div{div}
	\opn\Div{Div}
	\opn\aff{aff}
	\opn\relint{relint} \opn\st{st}
	\opn\lk{lk} \opn\cn{cn} \opn\core{core} \opn\vol{vol}  \opn\inp{inp} 
	\opn\nilpot{nilpot}
	\opn\link{link} \opn\star{star}\opn\lex{lex}\opn\set{set}
	\opn\width{wd}
	\opn\Fr{F}
	\opn\QF{QF}
	\opn\G{G}
	\opn\type{type}\opn\res{res}
	\opn\conv{conv}
	\opn\Int{Int}
	\opn\Deg{Deg}
	\opn\Sym{Sym}
	\opn\Con{Con}
	\opn\gr{gr}
	\def\pot#1#2{#1[\kern-0.28ex[#2]\kern-0.28ex]}
	\opn\dirlim{\underrightarrow{\lim}}
	\opn\inivlim{\underleftarrow{\lim}}
	\def\Implies{\ifmmode\Longrightarrow \else
		\unskip${}\Longrightarrow{}$\ignorespaces\fi}
	\def\implies{\ifmmode\Rightarrow \else
		\unskip${}\Rightarrow{}$\ignorespaces\fi}
	\def\iff{\ifmmode\Longleftrightarrow \else
		\unskip${}\Longleftrightarrow{}$\ignorespaces\fi}
	\newtheorem{Theorem}{Theorem}[section]
	\newtheorem{Corollary}[Theorem]{Corollary}
	\theoremstyle{definition}
	\newtheorem{Example}[Theorem]{Example}
	\let\epsilon\varepsilon
	\let\kappa=\varkappa
	\opn\dis{dis}
	\def\pnt{{\raise0.5mm\hbox{\large\bf.}}}
	\opn\Lex{Lex}
\begin{document}
\title{Vertex connectivity of chordal graphs}
%\title{Finite graphs and $2\sqrt{2}-2$}
\author[T\`ai Huy H\`a]{T\`ai Huy H\`a}
\address{Mathematics Department, Tulane University, 6823 St. Charles Avenue, New Orleans, LA 70118, USA}
\email{tha@tulane.edu}
\author[Takayuki Hibi]{Takayuki Hibi}
\address{Department of Pure and Applied Mathematics, Graduate School of Information Science and Technology, Osaka University, Suita, Osaka 565-0871, Japan}
\email{hibi@math.sci.osaka-u.ac.jp}
\dedicatory{}
\keywords{chordal graph, vertex connectivity, algebraic connectivity, projective dimension, edge ideal}
\subjclass[2020]{Primary 05C40; Secondary 13D02}
	 % 05C40 Connectivity
   % 13D02 Syzygies, resolutions, complexes and commutative rings
\thanks{}
\begin{abstract}
Let $G$ be a finite graph and $\kappa(G)$ the vertex connectivity of $G$.  A chordal graph $G$ is called chordal$^*$ if no vertex of $G$ is adjacent to all other vertices of $G$.  Using the syzygy theory in commutative algebra, it is proved that every chordal$^*$ graph $G$ on $n$ vertices satisfies $\kappa(G) \leq (n - 1) - \lceil2\sqrt{n}-2\,\rceil$.  Furthermore, given an integer $0 \leq \kappa \leq (n - 1) - \lceil2\sqrt{n}-2\,\rceil$, a chordal$^*$ graph $G$ on $n$ vertices satisfying $\kappa(G) = \kappa$ is constructed. 
\end{abstract}	
\maketitle
\thispagestyle{empty}

\section*{Introduction}
Let $G$ be a finite {\em simple} graph, i.e., $G$ does not contain loops nor multiple edges. Let $V(G)$ and $E(G)$ denote the vertex and edge sets of $G$, respectively. Let $\kappa(G)$ denote the {\em vertex connectivity} of $G$. That is, $\kappa(G)$ is the minimum cardinality of a subset $W \subset V(G)$ for which the induced subgraph $G|_{V(G) \setminus W}$ is disconnected. The present paper studies the {\em spectrum} of the  vertex connectivity of $G$, when $G$ is a {\em chordal} graph. Precisely, we investigate the set of all possible values that $\kappa(G)$ may take when $G$ is a chordal graph on $n$ vertices, for a fixed positive integer $n$. This is inspired by the authors' recent study \cite{MAXMIN} of resolution of edge ideals of finite graphs.

A vertex $x \in V(G)$ is called {\em universal} if $\{x, y\} \in E(G)$ for all $y \in V(G)$ with $y \neq x$.  When discussing the vertex connectivity of $G$, one can assume that $G$ has no universal vertex.  In fact, if $x \in V(G)$ is universal, then $\kappa(G) = \kappa(G|_{V(G) \setminus \{x\}})+1$. %Thus, we shall focus on chordal$^*$ graphs, which are chordal graphs without universal vertices.

A {\em vertex cover} of $G$ is a subset $C \subset V(G)$ for which $e \cap C \neq \emptyset$ for all $e \in E(G)$.  Let $\tau_{\max}(G)$ denote the maximum size of minimal vertex covers of $G$.   It is shown \cite[Theorem 2.2]{CHLN} that $\tau_{\max}(G) \geq \lceil2\sqrt{n} - 2\rceil$, where $n = |V(G)|$.  

In combinatorics on finite graphs, invariants $\kappa(G)$ and $\tau_{\max}(G)$ appear to be fundamentally distinct.  However, our work shows that, in certain restricted settings, the syzygy theory of monomial ideals in commutative algebra establishes an explicit relationship between these invariants.  

A chordal graph $G$ is called chordal$^*$ if $G$ has no universal vertex.  When $G$ is a chordal$^*$ graph on $n$ vertices, in Theorem \ref{main}, we deduce from the aforementioned bound $\tau_{\max}(G^c) \geq \lceil2\sqrt{n} - 2\rceil$, where $G^c$ is the complementary graph of $G$, that 
\begin{eqnarray}
\label{kappa}
0 \leq \kappa(G) \leq (n - 1) - \lceil2\sqrt{n}-2\,\rceil.
\end{eqnarray}  
Furthermore, given an integer $0 \leq \kappa \leq (n - 1) - \lceil2\sqrt{n}-2\,\rceil$, we construct a chordal$^*$ graph $G$ on $n$ vertices satisfying $\kappa(G) = \kappa$. 

For a more general graph on $n$ vertices containing no universal vertices, the vertex connectivity could be bigger than $(n - 1) - \lceil2\sqrt{n}-2\,\rceil$.  For example, if $n$ is even and if $G_n$ is a finite graph obtained by removing the edges $\{1,2\}, \{3,4\}, \ldots, \{n-1,n\}$ from the complete graph $K_n$ on $\{1,\ldots,n\}$, then $\kappa(G_n) = n - 2$.  If $n$ is odd and if $H_{n}$ is a finite graph by adding the edges $\{2,n\}, \{3,n\}, \ldots, \{n-1,n\}$ to $G_{n-1}$, then $\kappa(H_n) = n - 3$. Thus, our bound (\ref{kappa}) is special for chordal graphs.

Our proof of (\ref{kappa}) is heavily based on the syzygy theory of monomial ideals in commutative algebra.  Our technique has its origin in \cite{H, TH}, where the connectivity of comparability graphs of distributive lattices and of modular lattices is studied.  Furthermore, in \cite{TH}, a short proof of Barnette's theorem that the connectivity of the graph of a simplicial $(d-1)$-sphere with $ d \geq 2$ is at least $d$ was given.

%%%%%%%%%%%%%%%%%%%%%%%%%%%%%%%%%%%%%%%%%%%%%

\section{Connectivity of chordal$^*$ graphs}

The background of commutative algebra required in the proof of Theorem \ref{main} can be found in \cite[Chapter 9]{HHgtm260}.

\begin{Theorem}
\label{main}
Let $G$ be a chordal$\,^*$ graph on $n$ vertices and $G^c$ the complementary graph of $G$.  One has 
\begin{eqnarray}
\label{kappatau}    
\kappa(G) + \tau_{\max}(G^c) \le n-1
\end{eqnarray}
and
$$0 \leq \kappa(G) \leq (n - 1) - \lceil2\sqrt{n}-2\,\rceil.$$  
Furthermore, given an integer $0 \leq \kappa \leq (n - 1) - \lceil2\sqrt{n}-2\,\rceil$, there exists a chordal$\,^*$ graph $G$ on $n$ vertices satisfying $\kappa(G) = \kappa$.
\end{Theorem}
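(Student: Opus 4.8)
The plan is to read \eqref{kappatau} off from the clique structure of $G$, to deduce the displayed estimate from the cited bound of \cite{CHLN}, and then to realise every admissible value of $\kappa(G)$ by a split-graph construction whose feasibility threshold is governed by exactly that $\sqrt n$-bound. The first step is to reinterpret $\tau_{\max}(G^{c})$. A set $C\subseteq V(G)$ is a minimal vertex cover of $G^{c}$ precisely when $V(G)\setminus C$ is a maximal independent set of $G^{c}$, i.e.\ a maximal clique of $G$. Writing $m(G)$ for the least size of a maximal clique of $G$, this gives $\tau_{\max}(G^{c})=n-m(G)$, so that \eqref{kappatau} is the purely combinatorial assertion $\kappa(G)\le m(G)-1$. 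On the algebraic side this reads $\kappa(G)\le n-1-\height \pp$, where $\pp$ is a minimal prime of $S/I(G^{c})$ of maximal height; the relevant syzygy input is that $I(G^{c})$ has a linear resolution, since $(G^{c})^{c}=G$ is chordal (Fr\"oberg's theorem), which places the problem in the framework of \cite{H,TH}. Granting $\kappa(G)\le m(G)-1$, the displayed bound follows at once: \cite[Theorem 2.2]{CHLN} applied to $G^{c}$ gives $\tau_{\max}(G^{c})\ge\lceil 2\sqrt n-2\,\rceil$, whence $\kappa(G)\le (n-1)-\tau_{\max}(G^{c})\le (n-1)-\lceil 2\sqrt n-2\,\rceil$.

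To prove $\kappa(G)\le m(G)-1$ I would invoke the fact that every minimal vertex separator of a chordal graph is a clique. Fix a clique tree $T$ of $G$, whose nodes are the maximal cliques of $G$ and in which, for each tree-edge $\{Q,Q'\}$, the intersection $Q\cap Q'$ is a vertex separator of $G$. Since $G$ is chordal$^{*}$ and $n\ge 2$, it is not complete, so $T$ has at least two nodes and every node has a neighbour. Choosing $Q$ to be a maximal clique of minimum size $m(G)$ and $Q'$ a neighbour of $Q$ in $T$, the separator $Q\cap Q'$ is a proper subset of $Q$, so $\kappa(G)\le|Q\cap Q'|\le m(G)-1$; the disconnected case $\kappa(G)=0$ is trivial. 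This is exactly the Barnette-type passage of \cite{TH}: the minimal prime realising the top height corresponds to the clique $Q$, and chordality turns it into a separator of size smaller than $|Q|$.

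For the realisation statement I would use split graphs. Given $0\le\kappa\le\kappa_{\max}:=(n-1)-\lceil 2\sqrt n-2\,\rceil$, put $V(G)=K\sqcup U$ with $K$ a clique of size $k$, $U$ an independent set of size $n-k$, and join each vertex of $U$ to exactly $\kappa$ vertices of $K$. Any such $G$ is a split graph, hence chordal. If $\kappa=0$ one simply takes $G$ to be a clique together with isolated vertices; for $\kappa\ge 1$, the vertices of $U$ have degree $\kappa<n-1$, and one chooses the $K$--$U$ adjacencies so that every vertex of $K$ is non-adjacent to at least one vertex of $U$, which makes $G$ chordal$^{*}$. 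That $\kappa(G)=\kappa$ is a short cut argument: deleting the $\kappa$ neighbours of a fixed $U$-vertex isolates it (leaving $n-1-\kappa\ge 1$ other vertices), so $\kappa(G)\le\kappa$; conversely, deleting fewer than $\kappa$ vertices leaves $K$ connected (as $k\ge\kappa+1$) and leaves every surviving $U$-vertex with a neighbour in $K$, so $G$ stays connected and $\kappa(G)\ge\kappa$.

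The step I expect to be the main obstacle is producing, at the top of the range, an admissible $k$ together with a valid adjacency pattern. The chordal$^{*}$ requirement asks that the $n-k$ missed sets $K\setminus N(w)$, $w\in U$, each of size $k-\kappa$, cover $K$; such a covering exists for some $k\in[\kappa+1,n-1]$ if and only if $(n-k)(k-\kappa)\ge k$ is solvable in that range. Maximising the quadratic $(n-k)(k-\kappa)$ in $k$ (its maximum is about $\bigl((n-\kappa)/2\bigr)^{2}$, attained near $k=(n+\kappa)/2$) reduces the condition to $(n-\kappa)^{2}\gtrsim 2(n+\kappa)$, i.e.\ $n-\kappa\gtrsim 2\sqrt n$, and the exact integer threshold turns out to be $\kappa\le\kappa_{\max}$. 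Thus the construction is feasible precisely up to the bound forced by \eqref{kappatau}; carrying out the integer optimisation and exhibiting an explicit covering right up to $\kappa=\kappa_{\max}$ --- the arithmetic of which mirrors the proof of the bound in \cite{CHLN} --- is the delicate and most laborious part, while for $\kappa\le (n-2)/2$ the simple choice $k=\kappa+1$ already works.
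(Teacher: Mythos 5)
Your proposal is correct, but it reaches the theorem by a genuinely different and more elementary route than the paper. The paper works algebraically throughout: chordality of $G$ gives $S/I(G^c)$ a linear resolution by Fr\"oberg's theorem (\cite[Theorem 9.2.3]{HHgtm260}), which combined with \cite[Lemma 2.1]{TH} yields the \emph{equality} $n-\kappa(G)=\projdim(S/I(G^c))+1$ of \eqref{proj}; the inequality \eqref{kappatau} then follows from $\projdim(S/I(G^c))\ge\tau_{\max}(G^c)$ (\cite[Corollary 4.2]{MAXMIN}), and the realisation part is obtained by taking complements of the gapfree chordal graphs $\Gamma_j$ of \cite[Theorem 5.5]{MAXMIN} with prescribed projective dimension, again through the equality \eqref{proj}. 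You instead prove \eqref{kappatau} as the purely combinatorial statement $\kappa(G)\le m(G)-1$, with $m(G)$ the least size of a maximal clique, via clique-tree separators (this is correct, and in fact needs only that $G$ is chordal and not complete), and you realise each admissible $\kappa$ by explicit split graphs. Both halves of your argument are sound, and the feasibility arithmetic you deferred does close: writing $d=n-\kappa$, the hypothesis $\kappa\le(n-1)-\lceil 2\sqrt{n}-2\,\rceil$ is equivalent to $d+1\ge\lceil 2\sqrt{n}\,\rceil$, and the choice $k=\lceil(n+\kappa-1)/2\rceil$ (which lies in $[\kappa+1,n-1]$ because $\kappa\le n-2$) makes $(n-k)(k-\kappa)-k$ equal to $\bigl((d+1)^2-4n\bigr)/4$ or $\bigl((d+1)^2-4n-1\bigr)/4$ according to the parity of $n+\kappa$; both are nonnegative, the only delicate case being $n+\kappa$ even with $n$ a perfect square, where $d+1$, being odd, must be at least $2\sqrt{n}+1$. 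Two caveats. First, your appeal to \cite[Theorem 2.2]{CHLN} for $G^c$ silently uses that $G^c$ has no isolated vertex; that is precisely the chordal$^*$ hypothesis (an isolated vertex of $G^c$ is a universal vertex of $G$) and should be stated, as the paper does. Second, what the algebraic route buys is the equality \eqref{proj}, strictly stronger than \eqref{kappatau}: it is an exact formula for $\kappa(G)$ in terms of the Betti table, it makes the construction a one-line consequence of \cite[Theorem 5.5]{MAXMIN}, and it is what later drives the uniqueness statement of Theorem \ref{classification}. Your separator bound yields only the inequality, so it proves Theorem \ref{main} in full but would not by itself give the classification; what it buys in exchange is a self-contained combinatorial argument and completely explicit witnesses for every value of $\kappa$.
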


\begin{proof}
Let $\Delta(G)$ denote the clique complex (\cite[p.~155]{HHgtm260}) of $G$.  Let $S=K[x_1,\ldots, x_n]$ denote the polynomial ring in $n$ variables over a field $K$ and $I(G^c)$ the edge ideal (\cite[p.~156]{HHgtm260}) of $G^c$.  The Stanley--Reisner ring $S/I_{\Delta(G)}$ (\cite[p.~132]{HHgtm260}) of $\Delta(G)$ coincides with the quotient ring $S/I(G^c)$.  Since $G$ is chordal$^*$, it follows that $G^c$ possesses no isolated vertex.  Hence $\projdim(S/I(G^c)) \geq \tau_{\max}(G^c) \geq \lceil2\sqrt{n}-2\,\rceil$ (\cite[Corollary 4.2]{MAXMIN}).  Furthermore, since $S/I(G^c)$ has linear resolution (\cite[Theorem 9.2.3]{HHgtm260}), it follows that $\projdim(S/I(G^c))$ is equal to the biggest integer $i$ for which $\beta_{i,i+1}(S/I(G^c)) \neq 0$.  Now, \cite[Lemma 2.1]{TH} guarantees that $\kappa(G)$ is the biggest integer $i$ with $\beta_{n-i,n-i+1}(S/I(G^c)) = 0$.  Hence, 
\begin{eqnarray}
\label{proj}
n - \kappa(G) = \projdim(S/I(G^c)) + 1 \geq \tau_{\max}(G^c) + 1 \geq \lceil2\sqrt{n}-2\,\rceil + 1,
\end{eqnarray} 
as desired.

In \cite[Theorem 5.5]{MAXMIN}, given an integer $\lceil2\sqrt{n}-2\,\rceil \leq j \leq n-1$, a gapfree chordal graph $\Gamma_j$ on $n$ vertices satisfying $\projdim(S/I(\Gamma_j)) = j$ is constructed.  (Recall that a {\em gapfree} graph is a connected graph $G$ for which if $e$ and $e'$ are edges of $G$ with $e \cap e' = \emptyset$, then there is an edge $f$ of $G$ with $e \cap f \neq \emptyset$ and $e' \cap f \neq \emptyset$.)  It follows from \cite[Corollary 6.9]{HV} that $(\Gamma_j)^c$ is a chordal$^*$ graph and, by (\ref{proj}), one has $\kappa((\Gamma_j)^c) = (n - 1) - j$.  Thus, given an integer $0 \leq \kappa \leq (n - 1) - \lceil2\sqrt{n}-2\,\rceil$, the chordal$\,^*$ graph $G=(\Gamma_{(n - 1) - \kappa})^c$ on $n$ vertices satisfies $\kappa(G) = \kappa$.  
\end{proof}

It is unclear if the relationship (\ref{kappatau}) between $\tau_{\max}(G^c)$ and $\kappa(G)$ for a chordal$^*$ graph $G$ appears in the literature on finite graphs.

\begin{Example}
Let $C_n$ denote the cycle of length $n \geq 4$.  Since $\kappa(C_n) = 2$ and $\tau_{\max}((C_n)^c) = n-2$, one has $\kappa(C_n) + \tau_{\max}((C_n)^c) = n > n - 1$.   
\end{Example}

%\begin{Corollary}
    %\label{cor.tauKappa}
    %Let $G$ be a chordal$\,^*$ graph with $n$ vertices. Then,
    %$$\kappa(G) + \tau_{\max}(G^c) \le n-1.$$
%\end{Corollary}

%\begin{proof}
    %By (\ref{proj}), we have $\kappa(G) + \projdim(S/I(G^c)) = n-1$. Also, it follows from \cite[Corollary 4.2]{MAXMIN} that $\projdim(S/I(G^c)) \ge \tau_{\max}(G)$. Hence, the assertion then follows.
%\end{proof}

On the other hand, the {\em algebraic connectivity} of $G$, denoted by $a(G)$, was first introduced by Fiedler in \cite{F73} and has been much studied. A classical result of Fiedler \cite[4.1]{F73} states that if $G$ is not a complete graph, then $a(G) \le \kappa(G)$. Thus, as an immediate consequence of Theorem \ref{main}, we obtain the following corollary.

\begin{Corollary}
    \label{cor.alg}
    Let $G$ be a chordal$\,^*$ graph on $n$ vertices. Then,
    $$0 \le a(G) \le (n-1) - \lceil 2\sqrt{n} - 2 \rceil.$$
\end{Corollary}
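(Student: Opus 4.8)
The plan is to derive the corollary directly from Theorem \ref{main}, using Fiedler's comparison between the algebraic and vertex connectivities as the bridge; no independent spectral argument is needed. First I would recall that $a(G)$ is the second-smallest eigenvalue of the Laplacian matrix $L(G)$. Since $L(G)$ is symmetric and positive semidefinite, every eigenvalue is nonnegative, which immediately gives the lower bound $a(G) \ge 0$. This handles the left-hand inequality without any reference to chordality.

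For the upper bound, the first step is to verify that a chordal$^*$ graph $G$ on $n$ vertices is never complete. Indeed, in the complete graph every vertex is adjacent to all the others and is therefore universal; since the chordal$^*$ hypothesis forbids universal vertices, $G$ cannot be a complete graph. (In particular this forces $n \ge 2$, since a single-vertex graph has a vacuously universal vertex and so is not chordal$^*$.) Because $G$ is not complete, Fiedler's inequality \cite[4.1]{F73} is applicable and yields $a(G) \le \kappa(G)$.

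The last step is simply to splice in the bound on the vertex connectivity already proved in Theorem \ref{main}, namely $\kappa(G) \le (n-1) - \lceil 2\sqrt{n} - 2\rceil$. Chaining the two inequalities gives
$$a(G) \le \kappa(G) \le (n-1) - \lceil 2\sqrt{n} - 2\rceil,$$
which, combined with $a(G) \ge 0$, is exactly the asserted double inequality.

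I do not expect a genuine obstacle: the entire substance of the corollary is carried by Theorem \ref{main}, and the argument reduces to a short deduction. The only points that require attention are the two elementary checks—that the Laplacian spectrum is nonnegative (for the lower bound) and that the chordal$^*$ condition excludes $G = K_n$ (so that Fiedler's theorem may be invoked)—and both are immediate.
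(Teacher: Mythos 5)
Your proposal is correct and follows exactly the paper's route: the paper derives the corollary as an immediate consequence of Theorem \ref{main} combined with Fiedler's inequality $a(G) \le \kappa(G)$ for non-complete graphs \cite[4.1]{F73}. Your two extra checks---nonnegativity of the Laplacian spectrum and that the chordal$^*$ condition excludes $K_n$---are exactly the details the paper leaves implicit, and both are verified correctly.
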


It would be of interest to find all possible $0 \le a \le (n-1) - \lceil 2\sqrt{n}-2 \rceil$ which can be the algebraic connectivity of a chordal$^*$ graph on $n$ vertices.

%%%%%%%%%%%%%%%%%%%%%%%%%%%%%%%%%%%%%%%%%%%%%

\section{Classification}
It is a reasonable problem to classify all chordal$^*$ graphs $G$ on $n$ vertices satisfying $\kappa(G) = (n - 1) - \lceil2\sqrt{n}-2\,\rceil$. 

\begin{Example}
Let $n=6$.  Then $(n - 1) - \lceil2\sqrt{n}-2\,\rceil = 2$.  Each chordal$^*$ graph $G$ of Figure $1$ satisfies $\kappa(G) = 2$.  
\end{Example}
 
\begin{figure}[h]
\centering
\begin{tikzpicture}[scale=1.2]
\coordinate (z) at (0,1){};
\coordinate (p) at (-1,0.3){};
\coordinate (q) at (1,0.3){};
\coordinate (x) at (0,0){};
\coordinate (a) at (-0.7,-0.7){};
\coordinate (b) at (0.7,-0.7){};
\fill(z)circle(0.7mm);
\fill(p)circle(0.7mm);
\fill(q)circle(0.7mm);
\fill(x)circle(0.7mm);
\fill(a)circle(0.7mm);
\fill(b)circle(0.7mm);
\draw(z)--(p)--(a)--(b)--(q)--(x)--(b);
\draw(x)--(a);
\draw(x)--(z);
\draw(z)--(b);
\draw(z)--(a);
\end{tikzpicture}
\hspace{2cm}
\begin{tikzpicture}[scale=1.05]
\coordinate (a) at (1,1.5){};
\coordinate (b) at (0,1){};
\coordinate (c) at (0,0){};
\coordinate (d) at (1,-0.5){};
\coordinate (e) at (2,0){};
\coordinate (f) at (2,1){};
\fill(a)circle(0.7mm);
\fill(b)circle(0.7mm);
\fill(c)circle(0.7mm);
\fill(d)circle(0.7mm);
\fill(e)circle(0.7mm);
\fill(f)circle(0.7mm);
\draw(a)--(b)--(c)--(d)--(e)--(f)--cycle;
\draw(a)--(c)--(e)--cycle;
\end{tikzpicture}
\caption{Chordal$^*$ graphs with $6$ vertices}
\end{figure}
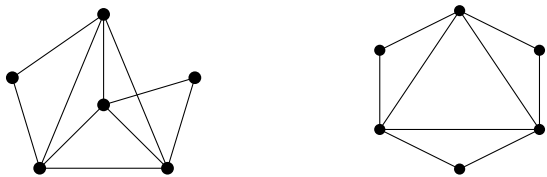

In particular, given an integer $n \geq 2$, a chordal$^*$ graphs $G$ on $n$ vertices satisfying $\kappa(G) = (n - 1) - \lceil2\sqrt{n}-2\,\rceil$ is, in general, not unique.  However,        

\begin{Theorem}
\label{classification}
Let $n \geq 4$ be a perfect square.  Then a chordal\,$^*$ graph on $n$ vertices satisfying $\kappa(G) = (n - 1) - \lceil2\sqrt{n}-2\,\rceil$ exists uniquely.  
\end{Theorem}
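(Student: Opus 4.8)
The plan is to turn the extremal condition into a minimization of projective dimension, and then into an integer optimization whose solution is rigid exactly when $n$ is a perfect square. Write $n=m^{2}$, so that $\lceil 2\sqrt n-2\,\rceil=2m-2$. By (\ref{proj}) one has $\kappa(G)=(n-1)-\projdim(S/I(G^{c}))$, so $\kappa(G)=(n-1)-(2m-2)$ holds if and only if $\projdim(S/I(G^{c}))$ attains its least possible value $2m-2$. Since $\projdim(S/I(G^{c}))\ge\tau_{\max}(G^{c})\ge 2m-2$ by \cite[Corollary~4.2]{MAXMIN} and \cite[Theorem~2.2]{CHLN}, this minimum forces simultaneously $\tau_{\max}(G^{c})=2m-2$ and the algebraic tightness $\projdim(S/I(G^{c}))=\tau_{\max}(G^{c})$. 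As the maximal independent sets of $G^{c}$ are precisely the maximal cliques of $G$, one has $\tau_{\max}(G^{c})=n-\omega_{\min}(G)$, where $\omega_{\min}(G)$ is the smallest size of a maximal clique of $G$; so the condition becomes $\omega_{\min}(G)=(m-1)^{2}+1$ together with the tightness $\projdim=\tau_{\max}$. Existence is already supplied by the construction in Theorem~\ref{main} with $\kappa=(m-1)^{2}$, so the whole content is uniqueness.

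I would first argue that every extremizer is a \emph{split} graph. The graph $G^{c}=\Gamma_{2m-2}$ produced in Theorem~\ref{main} is gapfree and chordal, hence $(2K_{2},C_{4},C_{5})$-free, hence split; and I expect the tightness $\projdim(S/I(G^{c}))=\tau_{\max}(G^{c})$ to force this in general. Indeed, a cochordal graph that is not split must contain an induced $C_{4}$ (an induced $C_{k}$ with $k\ge 5$ already produces an induced $\overline{C_{k}}$, contradicting chordality of $G$), and the plan is to show that such a $C_{4}$ makes $\projdim(S/I(G^{c}))$ strictly exceed $\tau_{\max}(G^{c})$, ruling it out. Granting split-ness, present $G^{c}$ as a clique $K$ of size $p$ together with an independent set $I$ of size $q=n-p$ and no isolated vertex. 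Counting the maximal independent sets of a split graph gives $\tau_{\max}(G^{c})=p-1+\Delta$, where $\Delta$ is the largest number of neighbours in $I$ of a vertex of $K$; since each vertex of $I$ has a neighbour, $\Delta\ge\lceil q/p\rceil$, so $\tau_{\max}(G^{c})\ge p+\lceil n/p\rceil-2$.

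The perfect-square hypothesis enters through the function $p\mapsto p+\lceil n/p\rceil$. One has $p+n/p\ge 2\sqrt n$ with equality only at $p=\sqrt n$; when $n=m^{2}$ this shows that the integer $p+\lceil n/p\rceil$ equals its minimum $2m$ if and only if $p=m$, while every other value of $p$ gives at least $2m+1$. Hence $\tau_{\max}(G^{c})=2m-2$ forces $p=m$, $\Delta=m-1$, and $q=m(m-1)$. The number $e$ of edges between $K$ and $I$ then satisfies $m(m-1)=q\le e\le p\Delta=m(m-1)$, so $e=q=p\Delta$; this pins down that each vertex of $I$ has exactly one neighbour and each vertex of $K$ has exactly $m-1$ neighbours in $I$. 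Therefore $G^{c}$ is, uniquely up to isomorphism, the graph obtained from $K_{m}$ by attaching $m-1$ pendant vertices to each of its vertices (for $m=2$ this is $P_{4}$, matching $n=4$), and its complement $G$ is the unique extremal chordal$^{*}$ graph.

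The step I expect to be the main obstacle is the split reduction: showing that, for chordal $G$ (so that $I(G^{c})$ has a linear resolution), the algebraic tightness $\projdim(S/I(G^{c}))=\tau_{\max}(G^{c})$ excludes an induced $C_{4}$ in $G^{c}$ and hence forces $G^{c}$ to be split. This is exactly where the whole classification rests, because without tightness the bare equality $\tau_{\max}(G^{c})=2m-2$ admits non-split solutions. I also note that the perfect-square hypothesis is essential for uniqueness and not merely for a clean value: for general $n$ the function $p+\lceil n/p\rceil$ has several minimizers (for instance $p\in\{2,3\}$ when $n=6$), and these yield the genuinely different extremal graphs of Figure~1; only for $n=m^{2}$ is the minimizer $p=m$ unique.
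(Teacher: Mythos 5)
Your first step coincides with the paper's: by (\ref{proj}), the equality $\kappa(G)=(n-1)-(2m-2)$ forces both $\tau_{\max}(G^c)=2m-2$ and the tightness $\projdim(S/I(G^c))=\tau_{\max}(G^c)$, and existence comes from Theorem \ref{main}. From there the routes diverge. The paper finishes in one stroke by citing \cite[Theorem 5.4]{MAXMIN}: among \emph{all} graphs on $n=m^2$ vertices without isolated vertices, the one with $\tau_{\max}=2m-2$ is unique; this determines $G^c$, hence $G$ (with $n=4$ checked by hand). You instead try to reprove that uniqueness, and the second half of your argument --- the split-graph count via $\tau_{\max}(G^c)=p-1+\Delta$, the bound $\Delta\ge\lceil q/p\rceil$, the rigidity of $p+\lceil n/p\rceil\ge 2m$ with unique minimizer $p=m$, and the edge count pinning down one pendant neighbour per vertex of $I$ and $m-1$ per vertex of $K$ --- is correct and recovers the extremal graph $K_m$ with $m-1$ pendants at each vertex.

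The genuine gap is exactly the step you flag: the reduction to split graphs, and your proposed mechanism for it is false. Tightness $\projdim(S/I(G^c))=\tau_{\max}(G^c)$ does \emph{not} exclude an induced $C_4$ in $G^c$. Take $G^c=W_4$, the wheel on five vertices: a $4$-cycle $abcd$ together with a vertex $e$ adjacent to $a,b,c,d$. Then $G=(W_4)^c=2K_2\cup K_1$ is chordal with no universal vertex, and $G^c$ has no isolated vertex, so the setting of the theorem applies; since $\kappa(G)=0$, formula (\ref{proj}) itself gives $\projdim(S/I(W_4))=n-1-\kappa(G)=4$, while $\{a,b,c,d\}$ is a minimal vertex cover (deleting any one of its vertices leaves the spoke to $e$ uncovered), so $\tau_{\max}(W_4)=4$ as well. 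Thus tightness holds, yet $W_4$ contains the induced $C_4$ on $\{a,b,c,d\}$ and is not split. Consequently, ruling out an induced $C_4$ must use the full extremal hypothesis $\tau_{\max}(G^c)=2m-2$, not merely tightness --- and that exclusion is precisely the hard core of \cite[Theorem 5.4]{MAXMIN}, the result the paper cites and you are attempting to bypass. As written, your argument classifies only the split extremizers; the claim that every extremizer is split remains unproved, so the proof is incomplete.
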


\begin{proof}
The existence follows from Theorem \ref{main}.  A chordal$^*$ graph on $4$ vertices satisfying $\kappa(G) = 1$ is only the path graph on $4$ vertices.  Suppose that a chordal$^*$ graph $G$ on $n \geq 9$ vertices satisfies $\kappa(G) = (n - 1) - \lceil2\sqrt{n}-2\,\rceil$.  It then follows from (\ref{proj}), that $\projdim(S/I(G^c)) = \tau_{\max}(G) = \lceil2\sqrt{n}-2\,\rceil$.  Now, it is shown \cite[Theorem 5.4]{MAXMIN} that a finite graph $H$ on $n$ vertices satisfying  $\tau_{\max}(H) = \lceil2\sqrt{n}-2\,\rceil$ exists uniquely.   
\end{proof}


\begin{thebibliography}{99}
\bibitem{CHLN}
V.~Costa, E.~Haeusler, E.~S.~Laber and L.~Nogueira, A note on the size of minimal
covers, {\em Info. Proc. Lett.} {\bf 102} (2007), 124--126.

\bibitem{F73} 
M.~Fiedler,
Algebraic connectivity of graphs,
{\em Czechoslovak Math. J.} {\bf 23} (1973), no. 2, 298--305.
%\bibitem{DS} 
%H.~Dao and J.~Schweig, Projective dimension, graph domination parameters, and independence complex homology, {\em J. of Combin. Theory, Ser. A} {\bf 120} (2013), 453--469.
\bibitem{MAXMIN} 
H.~T.~H\`a and  T.~Hibi, MAX MIN vertex cover and the size of Betti tables, {\em Annals Combin.} {\bf 25} (2021), 115--132.
\bibitem{HV}
H.~T.~H\`a and A.~Van Tuyl, Monomial ideals, edge ideals of hypergraphs, and
their graded Betti numbers, {\em J. Algebraic Combin.} {\bf 27} (2008), 215--245.
\bibitem{HHgtm260} 
J.~Herzog and T.~Hibi, ``Monomial Ideals,'' GTM 260, Springer, 2011.
\bibitem{H} 
T.~Hibi, The comparability graph of a modular lattice, {\em combinatorica} {\bf 18} (1998), 541--548.
\bibitem{TH}
N.~Terai and T.~Hibi, Finite free resolutions and $1$-skeletons of simplicial complexes, {\em J. Alg. Combin.} {\bf 6} (1997), 89--93.
\end{thebibliography}
\end{document}